\title{Mimetic Properties of Difference Operators: Product and Chain Rules as for
       Functions of Bounded Variation and Entropy Stability of Second Derivatives}
\author{Hendrik Ranocha}
\date{7th September 2018}
\declaretheoremstyle[
  bodyfont=\normalfont\itshape,
  headformat=\NAME\ \NUMBER\NOTE,
]{myplain}
\declaretheoremstyle[
  headformat=\NAME\ \NUMBER\NOTE,
]{mydefinition}
\newcommand{\envqed}{{\lower-0.3ex\hbox{$\triangleleft$}}}
\declaretheorem[style=myplain,numberwithin=section]{theorem}
\declaretheorem[style=myplain,numberlike=theorem]{lemma}
\declaretheorem[style=mydefinition,numberlike=theorem,qed=\envqed]{remark}
\declaretheorem[style=mydefinition,numberlike=theorem,qed=\envqed]{example}
\pgfplotsset{compat=1.13}
\renewcommand{\vec}[1]{\mathbf{#1}}
\newcommand{\diag}[1]{\operatorname{diag}\left(#1\right)}
\DeclarePairedDelimiterX\newset[1]\lbrace\rbrace{\setaux #1||\endsetaux}
\def\setaux#1|#2|#3\endsetaux{\if\relax\detokenize{#2}\relax #1 \else #1 \;\delimsize\vert\; #2 \fi}
\renewcommand{\set}[1]{\newset*{#1}}
\newcommand{\id}{\operatorname{id}}
\newcommand{\BV}{\operatorname{BV}}
\newcommand{\BVloc}{\operatorname{BV}_\mathrm{loc}}
\renewcommand{\O}{\mathcal{O}}
\NewDocumentCommand{\ENO}{o}{%
  \IfValueTF{#1}{%
    ENO$(#1)$
  }{%
    ENO
  }%
}
\NewDocumentCommand{\mENO}{o}{%
  \IfValueTF{#1}{%
    mENO$(#1)$
  }{%
    mENO
  }%
}
  \let\rho\varrho
  \let\phi\varphi
  \let\epsilon\varepsilon
\newcommand{\N}{\mathbb{N}}
\newcommand{\R}{\mathbb{R}}
\newsavebox{\DelimiterBox}
\newlength{\DelimiterHeight}
\newlength{\DelimiterDepth}
\newsavebox{\ArgumentBox}
\newlength{\ArgumentHeight}
\newlength{\ArgumentDepth}
\newlength{\ResizedDelimiterHeight}
\newlength{\ResizedDelimiterDepth}
\newcommand{\QED}{}
\begin{document}

\maketitle

\begin{abstract}
  
For discretisations of hyperbolic conservation laws, mimicking properties of
operators or solutions at the continuous (differential equation) level discretely
has resulted in several successful methods. While well-posedness for nonlinear
systems in several space dimensions is an open problem, mimetic properties such
as summation-by-parts as discrete analogue of integration-by-parts allow a direct
transfer of some results and their proofs, e.g. stability for linear systems.

In this article, discrete analogues of the generalised product and chain rules
that apply to functions of bounded variation are considered. It is shown that
such analogues hold for certain second order operators but are not possible for
higher order approximations. Furthermore, entropy dissipation by second derivatives
with varying coefficients is investigated, showing again the far stronger mimetic
properties of second order approximations compared to higher order ones.

\end{abstract}

\section{Introduction}
\label{sec:introduction}

Ever since the widespread application of computers in numerical mathematics and
even before, finite difference methods have been successfully applied to
differential equations. An important task is the
development and investigation of stable and well-behaved numerical methods.
While some general purpose methods can give satisfying results under certain
circumstances, schemes that have been developed specifically for the target
equation can be advantageous, e.g. if some properties of operators or solutions
at the continuous level are mimicked discretely. This has been the goal of, e.g.,
geometric numerical integration methods for ordinary differential equations, cf.
\cite{hairer2003geometric,hairer2006geometric}.

In this regard, a well-developed theory of the problem at the continuous
(differential equation) level is very important since it can be used as a
guideline for the development of (semi-) discretisations. For linear systems
of hyperbolic conservation laws, energy estimates play a fundamental role in the
analysis of well-posedness \cite{gustafsson2013time}. An important technique is
integration-by-parts. Thus, summation-by-parts (SBP) as a discrete analogue has
been very successful, since manipulations at the continuous level can be mimicked
discretely, yielding stability and conservation results, cf.
\cite{kreiss1974finite,strand1994summation,carpenter1994time,carpenter1999stable}.
Further references and results can be found in the review articles
\cite{svard2014review,fernandez2014review}.

Considering nonlinear equations such as scalar conservation laws, functions of
locally bounded variation play an important role. Solutions of quasilinear
equations can become discontinuous in finite time, even if smooth initial data
and coefficients are given \cite{dafermos2010hyperbolic}. In his seminal work
\cite{volpert1967spaces}, Vol'pert investigated functions of locally bounded
variation and their application to conservation laws. Since such functions can
be discontinuous, he developed a corresponding notion of derivatives as measures.
Moreover, he investigated products and compositions of functions of locally
bounded variation and developed corresponding product and chain rules.
Furthermore, the concept of bounded variation is important for the analysis of
numerical methods for conservation laws since it implies compactness properties
(Helly's theorem), cf. \cite{harten1987nonlinearity}.

The investigation of semidiscretisations satisfying a single entropy inequality
has received much interest, cf.
\cite{tadmor1987numerical,tadmor2003entropy,lefloch2002fully,sjogreen2010skew,fjordholm2012arbitrarily,fisher2013high,gassner2016well,wintermeyer2017entropy,gassner2016split,ranocha2017shallow,ranocha2017comparison,ranocha2018generalised}.
For some conservation laws such as Burgers' equations, conservative corrections
to the product rule can be used to obtain $L^2$ dissipative schemes, cf.
\cite{gassner2013skew,ranocha2016summation,ranocha2017extended}. Therefore, it
is interesting whether the chain and product rules for functions of bounded
variation have discrete analogues.

Furthermore, the investigation of numerical dissipation operators has received
much interest, cf.
\cite{vonneumann1950method,mattsson2004stable,svard2009shock,ranocha2018stability}.
Such operators can be motivated by the vanishing viscosity approach to conservation
laws, cf. \cite{bianchini2005vanishing}. For general entropies, the investigation
of dissipation induced by such terms relies on the chain rule, cf.
\cite[Proof of Theorem~I.3.4]{lefloch2002hyperbolic}. Thus, it is
natural to investigate the entropy dissipation of difference approximations.

This article is structured as follows. At first, functions of bounded variation
are briefly reviewed in section~\ref{sec:BV}, focusing on the chain and product
rules. Next, corresponding difference operators are investigated in
section~\ref{sec:difference-operators}. It is proven that there are analogous
product and chain rules for classical second order periodic and SBP operators
(Lemma~\ref{lem:product-rule} and Lemma~\ref{lem:chain-rule}). Furthermore, it is
proven that such analogues do not exist for higher order difference approximations
of the first derivative (Theorem~\ref{thm:product-rule} and Theorem~\ref{thm:chain-rule}).
Thereafter, dissipation operators approximating second derivatives with possibly
varying coefficients are investigated in section~\ref{sec:laplace}. It is proven
that certain second order difference operators are dissipative for every convex
entropy (Theorem~\ref{thm:laplace-2nd-order}). Moreover, it is shown that such
a result is impossible for discrete derivative operators with higher order of accuracy
(Theorem~\ref{thm:laplace-higher-order}). Finally, a summary and discussion is given
in section~\ref{sec:summary}.

\section{Functions of Bounded Variation}
\label{sec:BV}

Functions of locally bounded variation, i.e. those locally integrable functions
whose distributional first derivatives are Radon measures, play an important role
in analysis, for example in the theory of scalar conservation laws as described
in the seminal work of Vol'pert \cite{volpert1967spaces}. Further results about conservation
laws and references can be found in the monograph \cite{dafermos2010hyperbolic},
e.g. Theorem~6.2.6 and chapter~XI. Some general results about functions of bounded
variation can be found in \cite{volpert1985analysis,evans2015measure}.

For functions of locally bounded variation, a product of a possibly discontinuous
function and a measure occurs in both the chain rule and the product rule. If the
function is integrable with respect to the measure, this product is well-defined
as a measure, cf. \cite{volpert1967spaces}. In one space dimension, a function of
bounded variation is continuous almost everywhere and the limits from the left
and the right exist everywhere. If $u \in \BVloc([a,b]; \R^m)$ and $g\colon \R^m \to \R$
is (for simplicity) continuous, then Vol'pert \cite{volpert1967spaces} defined
the averaged composition of $g$ and $u$ via
\begin{equation}
  \widehat{g(u)}(x)
  :=
  \int_0^1 g\bigl( u_- + s (u_+ - u_-) \bigr) \dif s,
\end{equation}
where $u_\pm = \lim_{\epsilon \searrow 0} u(x \pm \epsilon)$ are the unique limits
of $u$ from the left and right hand side, respectively.
With this definition, the following chain and product rules have been obtained in
\cite[Section~13]{volpert1967spaces}.
\begin{theorem}
  If $u \in \BV([a,b]; \R^m)$ and $f \in C^1(\R^m; \R)$, the averaged composition
  $\widehat{\partial_{u_k} f(u)}$ is locally integrable with respect to the measure
  $\partial_x u_k$ for $k \in \set{1,\dots,m}$, $f(u) \in \BVloc$, and
  \begin{equation}
  \label{eq:BV-chain-rule}
    \partial_x f(u)
    =
    \sum_{k=1}^m \widehat{\partial_{u_k} f(u)} \, \partial_x u_k.
  \end{equation}
  In particular, for $\mathfrak{u}, \mathfrak{v} \in \BV[a,b]$,
  \begin{equation}
  \label{eq:BV-product-rule}
    \partial_x (\mathfrak{u} \mathfrak{v})
    =
    \widehat{\mathfrak{u}} \partial_x \mathfrak{v}
    + \widehat{\mathfrak{v}} \partial_x \mathfrak{u}.
  \end{equation}
\end{theorem}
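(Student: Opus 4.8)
The plan is to reduce the statement to the one‑dimensional structure theory of $\BV$ functions together with a mollification argument. Recall that $u \in \BV([a,b];\R^m)$ is bounded, possesses one‑sided limits $u_\pm(x)$ at every point, is approximately continuous off an at most countable jump set $\set{x_j}$ with $\sum_j |u_+(x_j) - u_-(x_j)| < \infty$, and that correspondingly the vector measure $\partial_x u$ splits as
\[
  \partial_x u
  =
  (\partial_x u)_\mathrm{diff} + \sum_j \bigl( u_+(x_j) - u_-(x_j) \bigr)\, \delta_{x_j},
\]
the diffuse part $(\partial_x u)_\mathrm{diff}$ (absolutely continuous plus Cantor part) carrying no atoms. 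Since $f \in C^1$ and, being $\BV$ in one space dimension, $u$ takes values in a fixed compact set $K \subset \R^m$, the partial derivatives $\partial_{u_k} f$ are bounded on $K$; hence each $\widehat{\partial_{u_k} f(u)}$ is a bounded Borel function and thus integrable with respect to the measure $|\partial_x u_k|$, which is finite on compact subsets of $(a,b)$. The function $f(u)$ is likewise bounded with one‑sided limits everywhere, so it remains to prove the identity \eqref{eq:BV-chain-rule}; finiteness of its right‑hand side then yields $f(u) \in \BVloc$ automatically.

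First I would mollify. Let $\rho_\epsilon$ be a standard mollifier and $u^\epsilon := u * \rho_\epsilon$, so that $u^\epsilon \to u$ pointwise at every continuity point of $u$ (hence Lebesgue‑a.e.) and boundedly, $\partial_x u^\epsilon = (\partial_x u) * \rho_\epsilon$ has $L^1$ norm bounded uniformly on compact subsets with the strict convergence $\|\partial_x u^\epsilon\|_{L^1(\R)} \to |\partial_x u|(\R)$, and $\partial_x u^\epsilon \rightharpoonup \partial_x u$ weakly‑$*$ as measures. For the smooth functions $u^\epsilon$ the classical chain rule holds pointwise,
\[
  \partial_x f(u^\epsilon)
  =
  \sum_{k=1}^m \partial_{u_k} f(u^\epsilon)\, \partial_x u_k^\epsilon .
\]
Bounded a.e. convergence gives $f(u^\epsilon) \to f(u)$ in $L^1_\mathrm{loc}$ and hence $\partial_x f(u^\epsilon) \rightharpoonup \partial_x f(u)$ distributionally, so it only remains to identify the weak‑$*$ limit of each term $\partial_{u_k} f(u^\epsilon)\, \partial_x u_k^\epsilon$.

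The decisive point — and the step I expect to be the main obstacle — is that $\partial_{u_k} f(u^\epsilon)$ does \emph{not} converge to $\partial_{u_k} f(u)$ near a jump: as $\epsilon \searrow 0$ the mollified curve $u^\epsilon$ sweeps, over a window of width $\O(\epsilon)$ around $x_j$, along a path converging to the straight segment from $u_-(x_j)$ to $u_+(x_j)$, while $\partial_x u_k^\epsilon$ concentrates the mass $u_{k,+}(x_j) - u_{k,-}(x_j)$ there. To control this I would fix $\psi \in C_c^\infty((a,b))$ and, for $\eta > 0$, use summability of the jumps to reduce to finitely many of them up to an atomic error of total mass $< \eta$; away from small neighbourhoods of the remaining jump points $u$ is approximately continuous $(\partial_x u)_\mathrm{diff}$‑a.e., $u^\epsilon$ converges there to the precise representative of $u$, and — splitting $(\partial_x u)_\mathrm{diff}$ into its absolutely continuous and Cantor parts and invoking the strict convergence above — a Reshetnyak‑type limiting argument produces the diffuse contribution $\int \psi\, \partial_{u_k} f(u)\, \dif(\partial_x u_k)_\mathrm{diff}$. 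On a small neighbourhood of an isolated jump point $x_j$ one writes $u^\epsilon = u_-(x_j) + \theta_\epsilon\, \bigl( u_+(x_j) - u_-(x_j) \bigr) + o(1)$ with $\theta_\epsilon$ smooth and increasing from $0$ to $1$, replaces $\psi$ by $\psi(x_j)$ up to an $\O(\eta)$ error, and substitutes $s = \theta_\epsilon$ in $\int \partial_{u_k} f(u^\epsilon)\, \partial_x u_k^\epsilon \dif x$ to obtain, in the limit,
\[
  \psi(x_j)\, \biggl( \int_0^1 \partial_{u_k} f\bigl( u_-(x_j) + s( u_+(x_j) - u_-(x_j)) \bigr) \dif s \biggr) \bigl( u_{k,+}(x_j) - u_{k,-}(x_j) \bigr)
  =
  \psi(x_j)\, \widehat{\partial_{u_k} f(u)}(x_j)\, \bigl( u_{k,+}(x_j) - u_{k,-}(x_j) \bigr) .
\]
Summing over the jump points, sending $\eta \to 0$, and summing over $k$ gives \eqref{eq:BV-chain-rule}, and with it $f(u) \in \BVloc$. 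Finally, the product rule \eqref{eq:BV-product-rule} is the special case $m = 2$, $f(u_1, u_2) = u_1 u_2 \in C^1(\R^2; \R)$, where $\partial_{u_1} f(u) = \mathfrak{v}$ and $\partial_{u_2} f(u) = \mathfrak{u}$; since these are affine in $u$, the averaged composition of the first is
\[
  \widehat{\partial_{u_1} f(u)}(x)
  =
  \int_0^1 \bigl( \mathfrak{v}_-(x) + s(\mathfrak{v}_+(x) - \mathfrak{v}_-(x)) \bigr) \dif s
  =
  \frac{\mathfrak{v}_-(x) + \mathfrak{v}_+(x)}{2}
  =
  \widehat{\mathfrak{v}}(x),
\]
and likewise $\widehat{\partial_{u_2} f(u)} = \widehat{\mathfrak{u}}$, so \eqref{eq:BV-chain-rule} specialises to \eqref{eq:BV-product-rule}.
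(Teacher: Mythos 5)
First, a point of reference: the paper does not prove this theorem at all --- it is imported from Vol'pert \cite{volpert1967spaces} (Section~13), and the text only works out the step-function special case as an illustration and records in Remark~\ref{rem:CR-as-PR} that the product rule is the chain rule applied to $f(u_1,u_2)=u_1u_2$. Your proposal is therefore necessarily a different route: a self-contained one-dimensional proof by mollification. The easy parts are in order --- integrability of $\widehat{\partial_{u_k} f(u)}$ from boundedness of $\partial_{u_k}f$ on the compact range of $u$, and your final paragraph deriving \eqref{eq:BV-product-rule} from \eqref{eq:BV-chain-rule} is exactly the paper's Remark~\ref{rem:CR-as-PR}. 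The jump-point analysis is also the right mechanism, and it exposes why the averaged composition appears: near an isolated jump the mollified function traverses, up to $o(1)$, the segment from $u_-(x_j)$ to $u_+(x_j)$ while $\partial_x u_k^\epsilon$ deposits the mass $u_{k,+}(x_j)-u_{k,-}(x_j)$, and the substitution $s=\theta_\epsilon(x)$ yields $\widehat{\partial_{u_k} f(u)}(x_j)$; the error terms are controllable because the total variation of $u$ on a punctured neighbourhood of $x_j$ can be made arbitrarily small and $\partial_{u_k}f$ is uniformly continuous on compacts.

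The one place where the argument is asserted rather than proved is the diffuse contribution, and that is precisely where the analytic difficulty sits. You need $\int \psi\,\partial_{u_k}f(u^\epsilon)\,\partial_x u_k^\epsilon\dif x \to \int \psi\,\partial_{u_k}f(u)\dif(\partial_x u_k)_{\mathrm{diff}}$ on the complement of the jump neighbourhoods, and this does not follow from weak-$*$ convergence of $\partial_x u_k^\epsilon$ together with pointwise (Lebesgue-a.e.\ and at continuity points) convergence of $\partial_{u_k}f(u^\epsilon)$: the product of a weak-$*$ convergent sequence of measures with a bounded factor that converges only pointwise, and whose limit is discontinuous, need not converge to the product of the limits --- the Cantor part in particular lives exactly where $u$ is continuous but $\partial_x u$ is singular, so no uniform convergence is available there. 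Closing this step requires the machinery you name but do not execute: strict convergence localised to the region in question (choosing the cut points of the excised neighbourhoods so that they are not charged by $|\partial_x u|$) combined with a Reshetnyak-type continuity theorem, or equivalently convergence of $u^\epsilon$ to the precise representative in $|\partial_x u|$-measure on that region. This is fillable with standard BV machinery and is essentially how the classical proofs proceed, but as written it is a genuine gap in the proposal; the remainder of the outline is sound.
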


\begin{remark}
\label{rem:scalar-vs-vector-valued}
  Sometimes, it might be useful to distinguish vector valued functions
  $u \in \BV([a,b]; \R^m)$, $m \geq 2$, and scalar valued functions
  $\mathfrak{u} \in \BV([a,b]; \R^1)$ explicitly. In this case, a fracture font
  will be used for scalar valued functions. Nevertheless, the case $m = 1$ is not
  excluded for vector valued functions $u \in \BV([a,b]; \R^m)$ if not stated
  otherwise. If it is clear from the context whether a function is scalar valued
  or may be vector valued, the usual font will be used for simplicity.
\end{remark}

\begin{remark}
\label{rem:CR-as-PR}
  If $\mathfrak{u}, \mathfrak{v} \in \BV[a,b]$, define $u \in \BV([a,b]; \R^2)$
  by $u(x) = (\mathfrak{u}(x), \mathfrak{v}(x))$. Considering the function
  $f \in C^1(\R^2; \R)$, given by $f(u) = f(u_1, u_2) = u_1 u_2$, the chain rule
  \eqref{eq:BV-chain-rule} becomes
  \begin{equation}
    \partial_x (\mathfrak{u} \mathfrak{v})
    =
    \partial_x f(u)
    =
    \sum_{k=1}^2 \widehat{\partial_{u_k} f(u)} \, \partial_x u_k
    =
    \widehat{\mathfrak{v}} \partial_x \mathfrak{u}
    + \widehat{\mathfrak{u}} \partial_x \mathfrak{v}.
  \end{equation}
  Thus, the product rule \eqref{eq:BV-product-rule} is indeed a special case
  of the chain rule \eqref{eq:BV-chain-rule}.
\end{remark}

The product rule \eqref{eq:BV-product-rule} is also proven in the monograph
\cite[Section~6.4]{volpert1985analysis}. A generalisation of the corresponding
definition of a possibly nonconservative product $f(u) \partial_x v$ has been
developed and investigated by Dal~Maso, LeFloch, and Murat
\cite{dalmaso1995definition}. See also \cite{raymond1996new,lefloch1999representation}
for further studies.

In order to illustrate the general theory described above and lay some foundations
for the following comparison with discrete derivative operators, two examples
using jump functions will be considered. If $u, v \in \BV[a,b]$ are (scalar
valued) jump functions,
\begin{equation}
  u(x)
  =
  \begin{cases}
    u_-, & x < 0,
    \\
    u_+, & x > 0,
  \end{cases}
  \qquad
  v(x)
  =
  \begin{cases}
    v_-, & x < 0,
    \\
    v_+, & x > 0,
  \end{cases}
\end{equation}
they are of bounded variation and their derivatives are Radon measures. In particular,
the derivatives $\partial_x u$ and $\partial_x v$ are multiples of the Dirac measure
centred at zero. Viewing such a measure as a function mapping (measurable) sets to
real numbers,
\begin{equation}
  (\partial_x u)(A)
  =
  \begin{cases}
    u_+ - u_-, & \text{if } 0 \in A,
    \\
    0, & \text{else}.
  \end{cases}
\end{equation}
Thus, the product rule \eqref{eq:BV-product-rule} becomes in this case
\begin{equation}
\label{eq:BV-product-rule-step-function}
\begin{aligned}
  \bigl( \partial_x (u v) \bigr)\bigl( \set{0} \bigr)
  &=
  u_+ v_+ - u_- v_-
  \\
  &=
  \frac{u_+ + u_-}{2} (v_+ - v_-) + \frac{v_+ + v_-}{2} (u_+ - u_-)
  \\
  &=
  \bigl( \widehat{u} \partial_x v \bigr)\bigl( \set{0} \bigr)
  + \bigl( \widehat{v} \partial_x u \bigr)\bigl( \set{0} \bigr),
\end{aligned}
\end{equation}
where the measures on both sides of \eqref{eq:BV-product-rule} have been applied
to the set $\set{0}$ containing only the jump point. Similarly, for $f \in C^1$,
the chain rule \eqref{eq:BV-chain-rule} becomes
\begin{multline}
\label{eq:BV-chain-rule-step-function}
  \bigl( \partial_x f(u) \bigr)\bigl(\! \set{0} \!\bigr)
  =
  f(u_+) - f(u_-)
  =
  \int_0^1 f'\bigl( u_- + s (u_+ - u_-) \bigr) \dif s \,\cdot (u_+ - u_-)
  \\
  =
  \bigl( \widehat{f'(u)} \cdot \partial_x u \bigr)\bigl(\! \set{0} \!\bigr).
\end{multline}
Of course, the intermediate steps of \eqref{eq:BV-product-rule-step-function} and
\eqref{eq:BV-chain-rule-step-function} can also be seen as proofs of the general
product and chain rule in this special case.

Interpreting the difference $u_+ - u_-$ as a discrete derivative,
\eqref{eq:BV-product-rule-step-function} is a discrete
product rule and \eqref{eq:BV-chain-rule-step-function} is a discrete chain rule.
Both use
averages instead of the usual point values occurring in the continuous analogues
for differentiable functions. Thus, it is interesting whether this can be
generalised.

\section{Difference Operators}
\label{sec:difference-operators}

Consider a general discrete derivative/difference operator $D$, acting on grid
functions $\vec{u} = (\vec{u}_i)_i = \bigl( u(x_i) \bigr)_i$ defined on a possibly
non-uniform grid with nodes $x_i \in \R$ and $h := \min_i (x_{i+1} - x_i) > 0$.
Note that this includes both classical finite difference operators and spectral
collocation operators such as nodal discontinuous Galerkin ones.

In practice, the grid function is represented by the vector of its point values
and the discrete derivative operator by a matrix with entries $D_{ij}$. General
nonlinear operations such as composition or multiplication are conducted pointwise,
i.e. if $\vec{u}$ and $\vec{v}$ are two grid functions, their product $\vec{uv}$
is the grid function with components $(\vec{uv})_i = \vec{u}_i \vec{v}_i$.

\subsection{Classical Second Order Derivative Operators}

The classical second order finite difference operator on a uniform grid is given by
\begin{equation}
  (D \vec{u})_i = \frac{\vec{u}_{i+1} - \vec{u}_{i-1}}{2 h} \approx u'(x_i).
\end{equation}
The corresponding summation-by-parts (SBP) operator uses this stencil in the
interior and --- if the nodes $x_0, \dots, x_N$ are used --- the boundary closures
\begin{equation}
  (D \vec{u})_0 = \frac{\vec{u}_1 - \vec{u}_0}{h} \approx u'(x_0),
  \qquad
  (D \vec{u})_N = \frac{\vec{u}_N - \vec{u}_{N-1}}{h} \approx u'(x_N).
\end{equation}

Analogously to the product rule \eqref{eq:BV-product-rule-step-function} for
a step function of bounded variation, considering scalar valued grid functions
$\vec{u}$ and $\vec{v}$,
\begin{equation}
\begin{aligned}
  (D \vec{u} \vec{v})_i
  &=
  \frac{\vec{u}_{i+1} \vec{v}_{i+1} - \vec{u}_{i-1} \vec{v}_{i-1}}{2 h}
  \\
  &=
  \frac{\vec{u}_{i+1} + \vec{u}_{i-1}}{2} \frac{\vec{v}_{i+1} - \vec{v}_{i-1}}{2 h}
  + \frac{\vec{u}_{i+1} - \vec{u}_{i-1}}{2 h} \frac{\vec{v}_{i+1} + \vec{v}_{i-1}}{2}
  \\
  &=
  (A \vec{u})_i (D \vec{v})_i + (D \vec{u})_i (A \vec{v})_i,
\end{aligned}
\end{equation}
if the averaging operator $A$ is defined by
\begin{equation}
  (A \vec{u})_i = \frac{\vec{u}_{i+1} + \vec{u}_{i-1}}{2} \approx u(x_i).
\end{equation}
For the corresponding SBP operator, the terms at the left boundary are
\begin{equation}
\begin{aligned}
  (D \vec{u} \vec{v})_0
  &=
  \frac{\vec{u}_{1} \vec{v}_{1} - \vec{u}_{0} \vec{v}_{0}}{h}
  \\
  &=
  \frac{\vec{u}_{1} + \vec{u}_{0}}{2} \frac{\vec{v}_{1} - \vec{v}_{0}}{h}
  + \frac{\vec{u}_{1} - \vec{u}_{0}}{h} \frac{\vec{v}_{1} + \vec{v}_{0}}{2}
  =
  (A \vec{u})_0 (D \vec{v})_0 + (D \vec{u})_0 (A \vec{v})_0,
\end{aligned}
\end{equation}
if the boundary closures of $A$ are given by
\begin{equation}
  (A \vec{u})_0 = \frac{\vec{u}_1 + \vec{u}_0}{2} \approx u(x_0),
  \qquad
  (A \vec{u})_N = \frac{\vec{u}_N + \vec{u}_{N-1}}{2} \approx u(x_N).
\end{equation}
The terms at the right boundary are similar. This is summed up in
\begin{lemma}
\label{lem:product-rule}
  The classical second order derivative operator $D$ (on a periodic grid or with
  boundary closures given above) fulfils the product rule
  \begin{equation}
    D (\vec{u} \vec{v}) = (A \vec{u}) (D \vec{v}) + (D \vec{u} ) (A \vec{v}),
  \end{equation}
  where the averaging operator $A$ defined above is of the same order of accuracy
  as the derivative operator $D$, i.e. it fulfils $(A \vec{u})_i = u(x_i) + \O(h^2)$
  in the interior and $(A \vec{u})_{0,N} = u(x_{0,N}) + \O(h)$ at the boundaries
  for a smooth function $u$.
\end{lemma}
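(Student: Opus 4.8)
The plan is to verify the identity nodewise by direct algebraic computation, as sketched in the displays preceding the statement, and then to read off the order of accuracy of $A$ from a Taylor expansion. The single ingredient is the elementary polarisation identity
\[
  ab - cd = \tfrac{a+c}{2}(b-d) + \tfrac{b+d}{2}(a-c),
\]
valid for all real numbers $a,b,c,d$ and checked at once by expanding the right-hand side.

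At an interior node $x_i$ I would apply this identity with $a = \vec{u}_{i+1}$, $c = \vec{u}_{i-1}$, $b = \vec{v}_{i+1}$, $d = \vec{v}_{i-1}$ and divide by $2h$; this rewrites $\bigl(D(\vec{u}\vec{v})\bigr)_i = \frac{\vec{u}_{i+1}\vec{v}_{i+1} - \vec{u}_{i-1}\vec{v}_{i-1}}{2h}$ as exactly $(A\vec{u})_i (D\vec{v})_i + (D\vec{u})_i (A\vec{v})_i$. On a periodic grid every node is an interior node, so this settles that case. For the SBP operator the two boundary nodes are handled by the same identity applied to the one-sided stencils: at $x_0$ take $a = \vec{u}_1$, $c = \vec{u}_0$, $b = \vec{v}_1$, $d = \vec{v}_0$ and divide by $h$, which produces $(A\vec{u})_0 (D\vec{v})_0 + (D\vec{u})_0 (A\vec{v})_0$ with the boundary closure $(A\vec{u})_0 = (\vec{u}_1 + \vec{u}_0)/2$; the node $x_N$ is symmetric. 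Thus the asserted product rule holds at every node, with the averaging operator $A$ whose interior and boundary stencils are precisely the symmetric counterparts of those of $D$.

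For the order statement I would Taylor-expand a smooth $u$ about $x_i$ on the uniform grid, $u(x_{i\pm1}) = u(x_i) \pm h u'(x_i) + \tfrac{h^2}{2} u''(x_i) + \O(h^3)$, so that $(A\vec{u})_i = \tfrac12\bigl(u(x_{i+1}) + u(x_{i-1})\bigr) = u(x_i) + \O(h^2)$, matching the second order accuracy of $D$; at the boundary the one-sided average gives $(A\vec{u})_0 = \tfrac12\bigl(u(x_1) + u(x_0)\bigr) = u(x_0) + \tfrac{h}{2} u'(x_0) + \O(h^2) = u(x_0) + \O(h)$, again matching $D$. I do not expect a genuine obstacle here: the entire content is the observation that the second order stencil splits exactly through the polarisation identity, and the only thing to watch is the bookkeeping at the boundary closures so that the $A$- and $D$-stencils stay consistent — a point that becomes delicate, and in fact obstructive, only for the higher order operators treated in Theorem~\ref{thm:product-rule}.
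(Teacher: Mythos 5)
Your proposal is correct and follows essentially the same route as the paper: the identity $ab-cd=\tfrac{a+c}{2}(b-d)+\tfrac{b+d}{2}(a-c)$ applied to the interior stencil and to the one-sided boundary closures is exactly the computation the paper carries out before stating the lemma, and the Taylor-expansion argument for the accuracy of $A$ matches as well.
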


Similarly, a general chain rule as discrete analogue of \eqref{eq:BV-chain-rule-step-function}
is satisfied. Indeed, if $f$ is continuously differentiable and $\vec{u}$ a possibly
vector valued grid function,
\begin{equation}
\begin{aligned}
  \bigl( D f(\vec{u}) \bigr)_i
  &=
  \frac{f(\vec{u}_{i+1}) - f(\vec{u}_{i-1})}{2h}
  \\
  &=
  \underbrace{\int_0^1 f'\bigl( \vec{u}_{i-1} + s (\vec{u}_{i+1} - \vec{u}_{i-1}) \bigr) \dif s}_{=:(A_{f'} \vec{u})_i}
  \cdot \frac{\vec{u}_{i+1} - \vec{u}_{i-1}}{2h}
  =
  (A_{f'} \vec{u})_i \cdot (D \vec{u})_i
\end{aligned}
\end{equation}
for interior nodes, where the possibly nonlinear averaging operator $A_{f'}$ has
been introduced. At the boundary nodes, it is given by
\begin{equation}
\begin{aligned}
  (A_{f'} \vec{u})_0
  &=
  \int_0^1 f'\bigl( \vec{u}_{0} + s (\vec{u}_{1} - \vec{u}_{0}) \bigr) \dif s
  \approx
  f'\bigl( u(x_0) \bigr),
  \\
  (A_{f'} \vec{u})_N
  &=
  \int_0^1 f'\bigl( \vec{u}_{N-1} + s (\vec{u}_{N} - \vec{u}_{N-1}) \bigr) \dif s
  \approx
  f'\bigl( u(x_N) \bigr).
\end{aligned}
\end{equation}
This is summed up in
\begin{lemma}
\label{lem:chain-rule}
  The classical second order derivative operator $D$ (on a periodic grid or with
  boundary closures) satisfies the chain rule
  \begin{equation}
    D f(\vec{u}) = (A_{f'} \vec{u}) \cdot (D \vec{u}),
  \end{equation}
  where the averaging operator $A_{f'}$ defined above is of the same order of
  accuracy as the derivative operator $D$, i.e. it fulfils
  $(A_{f'} \vec{u})_i = f'\bigl( u(x_i) \bigr) + \O(h^2)$
  in the interior and
  $(A_{f'} \vec{u})_{0,N} = f'\bigl( u(x_{0,N}) \bigr) + \O(h)$
  at the boundaries for smooth (and possibly vector valued) functions $u$ and $f$.
\end{lemma}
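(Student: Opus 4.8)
The plan is to separate the exact algebraic identity from the accuracy statement, since the displayed computation preceding the lemma already carries out the substance of the former. For the identity I would apply the fundamental theorem of calculus to $s \mapsto f\bigl(\vec{u}_{i-1} + s(\vec{u}_{i+1} - \vec{u}_{i-1})\bigr)$ on $[0,1]$, obtaining
\[
  f(\vec{u}_{i+1}) - f(\vec{u}_{i-1})
  =
  \int_0^1 f'\bigl( \vec{u}_{i-1} + s(\vec{u}_{i+1} - \vec{u}_{i-1}) \bigr) \dif s \cdot (\vec{u}_{i+1} - \vec{u}_{i-1}),
\]
where in the vector valued case $f'$ denotes the gradient and the product on the right is the Euclidean scalar product. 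Dividing by $2h$ turns this into $\bigl(D f(\vec{u})\bigr)_i = (A_{f'}\vec{u})_i \cdot (D\vec{u})_i$ at interior nodes; the same computation with the one-sided differences $(\vec{u}_1 - \vec{u}_0)/h$ and $(\vec{u}_N - \vec{u}_{N-1})/h$ gives the two boundary identities. On a periodic grid there are no boundary nodes and the interior formula holds everywhere, so the claimed identity follows in both settings.

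For the accuracy of $A_{f'}$ in the interior I would Taylor expand a smooth $u$ about $x_i$: $\vec{u}_{i\pm1} = u(x_i) \pm h\, u'(x_i) + \O(h^2)$, hence $\vec{u}_{i-1} + s(\vec{u}_{i+1} - \vec{u}_{i-1}) = u(x_i) + (2s-1)\, h\, u'(x_i) + \O(h^2)$ uniformly in $s \in [0,1]$. Expanding $f'$ (resp.\ the gradient and Hessian of $f$) about $u(x_i)$ and integrating in $s$, the $\O(h)$ contribution carries the factor $\int_0^1 (2s-1) \dif s = 0$, so it cancels and $(A_{f'}\vec{u})_i = f'\bigl(u(x_i)\bigr) + \O(h^2)$. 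At a boundary node the relevant segment is $[\vec{u}_0, \vec{u}_1]$, whose parametrisation gives $\vec{u}_0 + s(\vec{u}_1 - \vec{u}_0) = u(x_0) + s\, h\, u'(x_0) + \O(h^2)$; now the linear term integrates to $\tfrac12 h\, u'(x_0) f''\bigl(u(x_0)\bigr)$, which is generically nonzero, so only $(A_{f'}\vec{u})_{0} = f'\bigl(u(x_0)\bigr) + \O(h)$ can be claimed, matching the reduced order of $D$ there; the node $x_N$ is treated symmetrically.

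There is no genuine obstacle: the identity is exact by the fundamental theorem of calculus, and the accuracy is a one-line Taylor argument resting on the symmetry $\int_0^1 (2s-1)\dif s = 0$ of the centred stencil --- precisely the symmetry that is lost for the one-sided boundary closure, which is why the order drops there. The only points requiring a little care are bookkeeping in the vector valued case, so that $A_{f'}\vec{u}$ is understood as a grid function of (row) gradient vectors contracted against the componentwise derivative $D\vec{u}$, and noting that the $\O(h^2)$ remainders in the expansions are uniform in $s$ and hence survive the integration over $s$.
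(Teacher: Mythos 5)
Your proposal is correct and follows essentially the same route as the paper: the exact identity is obtained from the fundamental theorem of calculus applied to $s \mapsto f\bigl(\vec{u}_{i-1} + s(\vec{u}_{i+1} - \vec{u}_{i-1})\bigr)$ (respectively the one-sided segments at the boundary), which is precisely the displayed computation defining $A_{f'}$ in the paper. Your explicit Taylor argument with the cancellation $\int_0^1 (2s-1)\dif s = 0$ merely spells out the accuracy orders that the paper asserts without detail, and it is correct.
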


\begin{remark}
\label{rem:A-Af'}
  The averaging operator $A$ used for the product rule is a special case of the
  general averaging operator $A_{f'}$. Indeed, $A = A_{\id}$, where $\id$ is the
  identity mapping.
\end{remark}

\begin{remark}
  In general, $A_{f'}$ is neither a linear operator nor an averaging operator
  acting on $f'(\vec{u})$. Instead, it is a possibly nonlinear
  operator that uses intermediate values of $\vec{u}$ to average $f'$. It is
  linear if and only if $f'$ is linear, in particular
  in the case $f' = \id$, i.e. $A_{\id} = A$ discussed in Remark~\ref{rem:A-Af'}.
\end{remark}

\subsection{Higher Order Derivative Operators}

The product and chain rules for second order derivative operators cannot be
generalised to higher order derivative operators. In order to prove this, the
asymptotic expansion of the error of the derivative operator will be used.
\begin{lemma}
\label{lem:asymptotic-expansion-D1}
  Assume that $D$ is a discrete derivative operator of order $p$, i.e.
  $(D \vec{u})_i = u'(x_i) + \O(h^p)$ or, equivalently, $D$ is exact for polynomials
  of degree $\leq p$, with $p$ maximal. If $u$ is a smooth scalar-valued function,
  \begin{equation}
    (D \vec{u})_i = u'(x_i) + u^{(p+1)}(x_i) C^D_i h^p + \O(h^{p+1}),
  \end{equation}
  where $C_i^D h^p = \O(h^p)$ depends only on the grid and the derivative operator.
\end{lemma}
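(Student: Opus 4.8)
The plan is to extract the leading error term of $D$ by applying it to a suitable family of test functions and using the linearity of $D$ together with Taylor expansion. First I would fix a grid node $x_i$ and consider the monomials $m_q(x) := (x - x_i)^q / q!$ for $q = 0, 1, \dots, p+1$. By the assumption that $D$ is exact for polynomials of degree $\leq p$ (with $p$ maximal), we have $(D \vec{m}_q)_i = m_q'(x_i)$ for $q \leq p$; in particular $(D \vec{m}_q)_i = 0$ for $q \leq p-1$ (since $m_q'(x_i) = 0$ there for $q \geq 2$, and trivially for $q = 0$) and $(D \vec{m}_1)_i = 1$. The first monomial for which exactness can fail is $m_{p+1}$, and since $p$ is maximal we may set $C_i^D := (D \vec{m}_{p+1})_i = \O(h^p)$ (the order-$h^p$ scaling follows because every entry $D_{ij}$ of the stencil scales like $h^{-1}$ and $(\vec{m}_{p+1})_j = (x_j - x_i)^{p+1}/(p+1)! = \O(h^{p+1})$ for the finitely many $j$ in the stencil of row $i$).

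Next I would Taylor-expand a general smooth scalar function $u$ about $x_i$: for each grid node $x_j$ in the (finite) stencil of row $i$,
\begin{equation}
  \vec{u}_j = \sum_{q=0}^{p+1} u^{(q)}(x_i) \, (\vec{m}_q)_j + \O(h^{p+2}),
\end{equation}
where the remainder is uniform over the finitely many stencil nodes. Applying $D$ (a finite linear combination with weights of size $\O(h^{-1})$) and using linearity,
\begin{equation}
  (D \vec{u})_i
  = \sum_{q=0}^{p+1} u^{(q)}(x_i) \, (D \vec{m}_q)_i + \O(h^{p+1}).
\end{equation}
Now substitute the values computed above: the $q \leq p-1$ terms vanish, the $q = 1$ term contributes $u'(x_i)$, the $q = p$ term contributes $u^{(p)}(x_i) \cdot (D \vec{m}_p)_i = u^{(p)}(x_i) \cdot 0 = 0$ (since $m_p'(x_i) = 0$ for $p \geq 2$; for $p = 1$ this term is already the $q=1$ term and the argument is adjusted trivially), and the $q = p+1$ term contributes $u^{(p+1)}(x_i) \, C_i^D h^p$ after writing $C_i^D = C_i^D h^p$ in the normalisation of the statement. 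This yields exactly $(D \vec{u})_i = u'(x_i) + u^{(p+1)}(x_i) C_i^D h^p + \O(h^{p+1})$.

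The only genuinely delicate points are bookkeeping rather than conceptual: one must make sure the stencil of row $i$ has uniformly bounded size (or at least that the tail of the stencil is controlled) so that the $\O(h^{p+2})$ Taylor remainders, after multiplication by the $\O(h^{-1})$ weights and summation, still collapse to $\O(h^{p+1})$; for classical finite difference and spectral collocation operators the stencil width is fixed, so this is immediate. A second minor point is confirming that $(D\vec m_q)_i = m_q'(x_i)$ really does hold for \emph{all} $q \le p$ and not just at interior nodes — this is precisely the statement that $D$ is exact on $\P_p$, which is how order $p$ was defined. I expect no substantial obstacle; the main work is simply organising the Taylor expansion and invoking exactness on the right monomials.
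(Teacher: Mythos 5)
Your proposal is correct and follows essentially the same route as the paper's proof: Taylor expansion of $u$ about $x_i$, exactness of the linear operator $D$ on polynomials of degree $\leq p$ to collapse the low-order terms to $u'(x_i)$, identification of $C^D_i h^p$ with $\sum_j D_{ij}(x_j - x_i)^{p+1}/(p+1)!$ (your $(D\vec{m}_{p+1})_i$), and the $h^{-1}$ scaling of $D$ to control the remainder. Your monomial bookkeeping and stencil-width remark only make explicit what the paper compresses into one displayed computation.
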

\begin{proof}
  By Taylor expansion, using the exactness of $D$ for polynomials of degree
  $\leq p$,
  \begin{equation}
  \begin{aligned}
    &\phantom{=\;}
    (D \vec{u})_i
    =
    \sum_j D_{ij} \vec{u}_j
    =
    \sum_j D_{ij} u(x_j)
    \\
    &=
    \sum_j D_{ij} \Bigl(
      u(x_i)
      + u'(x_i) (x_j - x_i)
      + \dots
      + \frac{1}{(p+1)!} u^{(p+1)}(x_i) (x_j - x_i)^{p+1}
      + \O(h^{p+2})
    \Bigr)
    \\
    &=
    u'(x_i)
    + u^{(p+1)}(x_i) \underbrace{\sum_j \frac{1}{(p+1)!} D_{ij} (x_j - x_i)^{p+1}}_{=: C^D_i h^p}
    + \O(h^{p+1}).
  \end{aligned}
  \end{equation}
  Here, $C^D_i h^p = \O(h^p)$, since $D$ scales as $h^{-1}$.
\QED
\end{proof}

This can be used to prove one of the main observations of this article.
\begin{theorem}
\label{thm:product-rule}
  If $D$ is a discrete derivative operator of order $p > 2$, there can be no
  averaging operator $A$ of order $q \in \N$ such that there is a product rule of
  the form $D (\vec{u} \vec{v}) = (A \vec{u}) (D \vec{v}) + (D \vec{u}) (A \vec{v})$.
\end{theorem}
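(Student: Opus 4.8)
The plan is a proof by contradiction. Suppose $A$ is an averaging operator with $D(\vec{u}\vec{v}) = (A\vec{u})(D\vec{v}) + (D\vec{u})(A\vec{v})$ for all grid functions $\vec{u},\vec{v}$. Replacing $q$ by the true order of $A$ if necessary, I may assume that $A$ reproduces every polynomial of degree $\le q-1$ on the grid but fails to reproduce some polynomial of degree $q$ (and if $A$ happened to reproduce all polynomials, the first case below still applies verbatim). The other ingredient is Lemma~\ref{lem:asymptotic-expansion-D1} together with the maximality of $p$: the operator $D$ is exact on polynomials of degree $\le p$, and $(D\vec{w})_i = w'(x_i) + w^{(p+1)}(x_i)\,C_i^D h^p + \O(h^{p+1})$ with $C_i^D \ne 0$ for at least one node. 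I would then feed the sampled values of a few well-chosen monomials into the identity, splitting into the cases $q \ge p$ and $q \le p-1$, both of which occur because $p \ge 3$.

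For $q \ge p$, I would test the putative identity with $\vec{u}$ the restriction of $x \mapsto x^2$ and $\vec{v}$ the restriction of $x \mapsto x^{p-1}$. Since $2 \le p-1 \le q-1$, both $A\vec{u}=\vec{u}$ and $A\vec{v}=\vec{v}$ hold exactly; since $2,\,p-1 \le p$, the values $(D\vec{u})_i = 2x_i$ and $(D\vec{v})_i = (p-1)x_i^{p-2}$ are exact; and $\vec{u}\vec{v}$ is the restriction of $x^{p+1}$, a polynomial of degree exactly $p+1$, so the Taylor expansion underlying Lemma~\ref{lem:asymptotic-expansion-D1} terminates and yields $(D(\vec{u}\vec{v}))_i = (p+1)x_i^p + (p+1)!\,C_i^D h^p$ with no further remainder. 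Substituting all of this into the assumed product rule, the $x_i^p$ terms cancel and what remains is $(p+1)!\,C_i^D h^p = 0$ at every node $i$, contradicting $C_i^D \ne 0$ for some $i$.

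For $q \le p-1$, I would test with $\vec{u}$ the restriction of $x \mapsto x^q$ and $\vec{v}$ the restriction of $x \mapsto x$ (if $q=1$, take $\vec{u}=\vec{v}$ the restriction of $x$ and use that an averaging operator reproduces constants). Now $x^q$, $x$, and $x^{q+1}=\vec{u}\vec{v}$ all have degree $\le p$, so $D$ is exact on each, with $(D\vec{v})_i \equiv 1$, and $A$ fixes $\vec{v}$ (its degree is $1 \le q-1$ for $q \ge 2$, while for $q=1$ the two test functions coincide). Inserting these into the product rule, every term except $(A\vec{u})(D\vec{v})$ is pinned down by exactness, and since $(D\vec{v})_i \equiv 1$ the identity collapses to $(A\vec{u})_i = u(x_i)$ at every node, i.e. $A$ fixes the restriction of $x^q$. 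By linearity of $A$ this, combined with exactness on degree $\le q-1$, would make $A$ exact on all polynomials of degree $\le q$, contradicting the maximality of $q$.

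Once the right test functions are in hand the computation is short, so I expect the main obstacle to be precisely the construction of those functions: one wants monomials whose \emph{product} lands at the critical degree $p+1$ (resp. $q+1$), so that in each regime of $q$ all but one term of the product rule is fixed by exactness and exactly one clean contradiction survives. The only other point needing care is noticing that, in the case $q \ge p$, the value $(D(\vec{u}\vec{v}))_i$ is given \emph{exactly} by the leading-error formula of Lemma~\ref{lem:asymptotic-expansion-D1}, the $\O(h^{p+1})$ tail being absent because $\vec{u}\vec{v}$ is a polynomial of degree exactly $p+1$.
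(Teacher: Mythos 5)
Your proof is correct, but it takes a genuinely different route from the paper's. The paper works with general smooth $u,v$, expands both sides of the putative product rule via Lemma~\ref{lem:asymptotic-expansion-D1}, and compares coefficients in the three cases $q<p$, $q=p$, $q>p$; the decisive step for $q=p$ is the Leibniz formula for $(uv)^{(p+1)}$ together with the observation that the mixed terms $u^{(k)}v^{(p+1-k)}$, $2\le k\le p-1$, cannot be absorbed into $u'(x_i)C^A_i(v)+v'(x_i)C^A_i(u)$ in \eqref{eq:proof-1} once $p>2$. You instead feed two explicit monomial pairs into the assumed identity: for $q\ge p$ the pair $(x^2,x^{p-1})$, whose product sits at the critical degree $p+1$, so the terminating Taylor expansion forces $C^D_i=0$ at every node and contradicts the maximality of $p$; for $q\le p-1$ the pair $(x^q,x)$, which forces $A$ to reproduce $x^q$ and contradicts the maximality of $q$. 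The computations check out, the hypothesis $p>2$ enters exactly where you need $2\le q-1$ in the first case, and your argument avoids the combinatorial ``cannot be factored'' step entirely, yielding a contradiction from two concrete inputs --- in that sense it is more elementary, and it even needs the identity only on two polynomial grid functions rather than on all smooth ones. What the paper's version buys in exchange is generality in $A$: its proof assumes only an expansion $(A\vec{u})_i=u(x_i)+C^A_i(u)h^q+\O(h^{q+1})$ with a possibly $u$-dependent coefficient, so it also covers nonlinear averaging operators of the kind appearing in the chain rule (Lemma~\ref{lem:chain-rule}, Theorem~\ref{thm:chain-rule}), whereas your normalisation of $q$ via polynomial reproduction and the closing step of your second case (``by linearity of $A$'') restrict the argument to linear averaging operators whose order is characterised by polynomial exactness; within that (natural) reading of the statement, your proof is complete.
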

\begin{proof}
  Consider the asymptotic expansions
  \begin{equation}
  \label{eq:expansion-Duv}
    \bigl( D (\vec{u} \vec{v}) \bigr)_i
    =
    (u v)'(x_i) + (u v)^{(p+1)}(x_i) C^D_i h^p + \O(h^{p+1})
  \end{equation}
  and
  \begin{equation}
  \begin{aligned}
    (A \vec{u})_i (D \vec{v})_i
    &=
    \left( u(x_i) + C^A_i(u) h^q + \O(h^{q+1}) \right)
    \left( v'(x_i) + v^{(p+1)}(x_i) C^D_i h^p + \O(h^{p+1}) \right),
    \\
    (D \vec{u})_i (A \vec{v})_i
    &=
    \left( u'(x_i) + u^{(p+1)}(x_i) C^D_i h^p + \O(h^{p+1}) \right)
    \left( v(x_i) + C^A_i(v) h^q + \O(h^{q+1}) \right),
  \end{aligned}
  \end{equation}
  where $C^A_i(u)$ is the leading order coefficient for $A$ and may depend on the
  function $u$ and its derivatives. There are three different cases: $q < p$,
  $q = p$, and $q > p$.

  If $q < p$, the product rule cannot hold, because the terms
  \begin{equation}
    \left(
      u'(x_i) C^A_i(v) + v'(x_i) C^A_i(u)
    \right) h^q
  \end{equation}
  involving $h^q$ are not matched by terms in \eqref{eq:expansion-Duv}. Basically,
  $D (\vec{u} \vec{v})$ is a $p$-th order approximation while
  $(A \vec{u}) (D \vec{v}) + (D \vec{u}) (A \vec{v})$ is only a $q$-th order
  approximation.

  If $q = p$, the product rule can only hold if the terms involving $h^p = h^q$
  are equal, i.e. if
  \begin{multline}
  \label{eq:proof-1}
    (u v)^{(p+1)}(x_i) C^D_i
    \\
    =
    \left( u(x_i) v^{(p+1)}(x_i) + u^{(p+1)}(x_i) v(x_i) \right) C^D_i
    + u'(x_i) C^A_i(v) + v'(x_i) C^A_i(u).
  \end{multline}
  Since
  \begin{equation}
    (u v)^{(p+1)}(x_i)
    =
    \sum_{k=0}^{p+1} \binom{p+1}{k} u^{(k)}(x_i) v^{(p+1-k)}(x_i),
  \end{equation}
  the terms with $k=0$ and $k=p+1$ match the braces on the right hand side of
  \eqref{eq:proof-1}, but the remaining terms can only match if $p \leq 2$,
  since the remaining sum cannot be factored as on the right hand side.

  Finally, if $q > p$, the terms involving $h^p$ do not match, because
  \begin{equation}
    (u v)^{(p+1)}(x_i)
    \neq
    u(x_i) v^{(p+1)}(x_i) + u^{(p+1)}(x_i) v(x_i)
  \end{equation}
  for $p \geq 1$ in general.
\QED
\end{proof}

\begin{remark}
  Using polynomial collocation methods on Lobatto Legendre or Gauss Legendre
  nodes in $[-1,1]$, a discrete product rule holds for $p = 1$, i.e. for two
  nodes, since they are of the same form as the classical finite difference
  derivative operator. However, for $p=2$, there can be no product rule. Indeed,
  for Lobatto nodes $\set{-1,0,1}$ and $u(x) = (1+x)^2 = v(x)$, the discrete
  derivatives of $u$ and $v$ at $-1$ are zero (since they are exact), but the
  discrete derivative of $uv$ at $-1$ is
  \begin{equation}
    (D \vec{u} \vec{v})_{-1}
    =
    - \frac{3}{2} \vec{u}_{-1} \vec{v}_{-1}
    + 2 \vec{u}_0 \vec{v}_0
    - \frac{1}{2} \vec{u}_1 \vec{v}_1
    =
    0 + 2 \cdot 1^2 - \frac{1}{2} \cdot 4^2
    =
    - 6
    \neq 0.
  \end{equation}
  A similar argument holds for Gauss Legendre nodes.
\end{remark}

Since the product rule is a special case of the chain rule with vector valued
functions $u$ (cf. Remark~\ref{rem:CR-as-PR}), a general chain rule is also
excluded for discrete derivative operators of higher order of accuracy. However,
this argument does not forbid a chain rule for scalar valued functions. Nevertheless,
this case is also excluded by the second main observation of this article.
\begin{theorem}
\label{thm:chain-rule}
  If $D$ is a discrete derivative operator of order $p > 2$, there can be no
  general averaging operator $A_{f'}$ of order $q \in \N$ such that there is a
  chain rule of the form $D\bigl( f(\vec{u}) \bigr) = (A_{f'} \vec{u}) \cdot (D \vec{u})$.
\end{theorem}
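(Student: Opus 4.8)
The plan is to mimic the structure of the proof of Theorem~\ref{thm:product-rule}, comparing asymptotic expansions of both sides for a carefully chosen smooth scalar function $\vec{u}$ and a suitable $f \in C^1$, and deriving a contradiction from the $h^p$-order terms. First I would write, using Lemma~\ref{lem:asymptotic-expansion-D1}, the expansion of the left hand side,
\begin{equation*}
  \bigl( D f(\vec{u}) \bigr)_i
  =
  \bigl( f(u) \bigr)'(x_i) + \bigl( f(u) \bigr)^{(p+1)}(x_i) C^D_i h^p + \O(h^{p+1}),
\end{equation*}
and the expansion of the right hand side,
\begin{equation*}
  (A_{f'} \vec{u})_i (D \vec{u})_i
  =
  \left( f'\bigl( u(x_i) \bigr) + C^{A_{f'}}_i(u) h^q + \O(h^{q+1}) \right)
  \left( u'(x_i) + u^{(p+1)}(x_i) C^D_i h^p + \O(h^{p+1}) \right),
\end{equation*}
where $C^{A_{f'}}_i(u)$ is the leading order error coefficient of the averaging operator, allowed to depend on $f$, $u$ and their derivatives. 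As in the product-rule proof, I would split into the cases $q < p$, $q = p$, $q > p$. The cases $q < p$ and $q > p$ are handled exactly as before: for $q < p$ the $h^q$ term $u'(x_i) C^{A_{f'}}_i(u)$ on the right has no partner on the left (the left side is a genuine $p$-th order approximation), and for $q > p$ the $h^p$ terms already fail to match because $\bigl( f(u) \bigr)^{(p+1)} \neq f'(u)\, u^{(p+1)}$ in general for $p \geq 1$.

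The heart of the argument is the case $q = p$. Matching the $h^p$ coefficients forces
\begin{equation*}
  \bigl( f(u) \bigr)^{(p+1)}(x_i) \, C^D_i
  =
  f'\bigl( u(x_i) \bigr) \, u^{(p+1)}(x_i) \, C^D_i
  + u'(x_i) \, C^{A_{f'}}_i(u).
\end{equation*}
Hence one needs $C^{A_{f'}}_i(u) \, u'(x_i) = C^D_i \bigl[ \bigl( f(u) \bigr)^{(p+1)}(x_i) - f'(u(x_i)) u^{(p+1)}(x_i) \bigr]$. I would then invoke the Fa\`a di Bruno expansion of $\bigl( f(u) \bigr)^{(p+1)}$: its leading term is $f'(u)\,u^{(p+1)}$, which cancels, but for $p+1 \geq 3$, i.e. $p \geq 2$, there remain terms such as $f''(u)$ times products of lower derivatives of $u$ (e.g.\ the term $\binom{p+1}{1}\cdots$ involving $f''(u)\, u^{(p)} u'$ and, for $p\ge 2$, a term $f''(u)\,(u'')\cdot(\text{stuff})$ with no factor of $u'$). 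The strategy is to choose a smooth $u$ with $u'(x_i) = 0$ at the node but $u''(x_i) \neq 0$ (and whatever further derivatives are convenient), together with an $f$ with $f''(u(x_i)) \neq 0$, so that the right hand side above is zero (because of the $u'(x_i)=0$ factor) while the left hand side is a nonzero multiple of $C^D_i$. One must also check $C^D_i \neq 0$ for at least one node $i$; this holds because $p$ is maximal in Lemma~\ref{lem:asymptotic-expansion-D1}, so the leading error coefficient cannot vanish identically, and one picks such an $i$. This yields the contradiction.

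The main obstacle I anticipate is bookkeeping in the $q = p$ case: one must be sure that after cancelling the $f'(u)\,u^{(p+1)}$ term, the \emph{remaining} part of $\bigl( f(u) \bigr)^{(p+1)}$ genuinely cannot be written as $u'(x_i)$ times something, i.e.\ that it contains a monomial in the derivatives of $u$ with no factor of $u'$ and a genuine factor of a higher $f$-derivative. For $p+1 = 3$ this remaining part is $3 f''(u) u' u'' + f'''(u) (u')^3$, which \emph{is} divisible by $u'$ — so for $p = 2$ no contradiction arises (consistent with Lemma~\ref{lem:chain-rule}), whereas for $p+1 \geq 4$, i.e.\ $p \geq 3$, the term $f''(u)\,u''\,u^{(p)}$ (coefficient a positive binomial-type constant) has no $u'$ factor, which is exactly what makes $p > 2$ the threshold. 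I would therefore treat $p = 3$ explicitly first to locate the offending term $\propto f''(u)(u'')^2$ or $f''(u)u''u'''$ and then remark that for larger $p$ an analogous term persists, so choosing $u$ with $u'(x_i)=0$, $u''(x_i)\neq 0$ and $f''$ nonzero there completes the proof.
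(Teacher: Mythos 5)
Your proposal follows essentially the same route as the paper's proof: the asymptotic expansions from Lemma~\ref{lem:asymptotic-expansion-D1}, the three cases $q<p$, $q=p$, $q>p$, and the formula of Fa\`a di Bruno in the case $q=p$; your additional step of evaluating at a node with $u'(x_i)=0$, $u''(x_i)\neq 0$, $f''\bigl(u(x_i)\bigr)\neq 0$ and $C^D_i\neq 0$ (which indeed follows from the maximality of $p$) simply makes explicit the paper's assertion that $u'(x_i)$ cannot be factored out of the remaining terms, so the argument is correct. Only a minor slip: the generic offending term is $f''(u)\,u''\,u^{(p-1)}$ (a partition of $p+1$ into the parts $2$ and $p-1$), not $f''(u)\,u''\,u^{(p)}$, which does not affect the conclusion.
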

\begin{proof}
  By the argument above, it suffices to consider scalar valued functions. In this
  case,
  \begin{equation}
    \bigl(D f(\vec{u}) \bigr)_i
    =
    f'(\vec{u}_i) u'(x_i)
    + \bigl( f(u) \bigr)^{(p+1)}(x_i) C^D_i h^p
    + \O(h^{p+1})
  \end{equation}
  and
  \begin{multline}
    (A_{f'} \vec{u})_i (D \vec{u})_i
    =
    \left( f'(\vec{u}_i) + C^A_i\bigl(f'(u)\bigr) h^q + \O(h^{q+1}) \right)
    \\
    \left( u'(x_i) + u^{(p+1)}(x_i) C^D_i h^p + \O(h^{p+1}) \right).
  \end{multline}
  Again, there are three different cases: $q < p$, $q = p$, and $q > p$.

  If $q < p$, the chain rule cannot hold, because $D\bigl( f(\vec{u}) \bigr)$ is
  a $p$-th order approximation while $(A_{f'} \vec{u}) (D u)$ is only a $q$-th
  order approximation.

  If $q = p$, $\bigl( f(u) \bigr)^{(p+1)}(x_i)$ can be expressed using the formula
  of Faà di Bruno \cite[Lemma~II.2.8, simplified for the scalar case]{hairer2008solving},
  \begin{equation}
    \bigl( f(u) \bigr)^{(p+1)}(x_i)
    =
    \sum_{u \in LS_{p+2}} f^{(m)}(\vec{u}_i)
      \, u^{(\delta_1)}(x_i) \dots u^{(\delta_m)}(x_i).
  \end{equation}
  Here, $LS_{p+2}$ is the set of special labelled trees of order $p+2$ which have
  no ramifications except at the root, $m$ is the number of branches leaving the
  root, and $\delta_1, \dots, \delta_m$ are the numbers of nodes in each of these
  branches, see \cite[Lemma~II.2.8]{hairer2008solving}.
  Thus, it is clear that $u'(x_i)$ cannot be factored out of the remaining terms
  after subtracting $f'(\vec{u}_i) u^{(p+1)}(x_i)$ if $p > 2$.

  Finally, if $q > p$, the terms involving $h^p$ do not match, because
  \begin{equation}
    \bigl( f(u) \bigr)^{(p+1)}(x_i)
    \neq
    f'(\vec{u}_i) u^{(p+1)}(x_i)
  \end{equation}
  for $p \geq 1$ in general.
\QED
\end{proof}

\begin{remark}
  A product rule for classical difference operators with error term of the form
  \begin{equation}
    (D \vec{u} \vec{v})_i
    =
    \vec{u}_i (D \vec{v})_i + (\partial_x u)_i (A \vec{v})_i + e_i
  \end{equation}
  has been used in \cite[Lemma~3.1 and Lemma~3.2]{mishra2010stability}. If $u$ is
  smooth, $(\partial_x u)_i$ is the derivative at $x_i$ and $\norm{e} \leq C h \norm{\vec{v}}$
  for some constant $C > 0$. The averaging operator $A$ is linear and of the same
  order of accuracy as the derivative operator $D$.
\end{remark}

\begin{remark}
  The investigation of discrete product and chain rules is also somewhat loosely
  related to the entropy stability and conservation theory initiated by Tadmor
  \cite{tadmor1987numerical,tadmor2003entropy}. Indeed, instead of a chain rule
  of the form $\partial_x f(u) = \widehat{f'(u)} \partial_x u$ \eqref{eq:BV-chain-rule},
  a discrete version of
  $U'(u) \cdot \widetilde{\partial_x f(u)} = \widetilde{\partial_x F(u)}$
  is used, where $U$ is the entropy fulfilling $U'(u) \cdot f'(u) = F'(u)$. Such
  approximations can be found for arbitrary order, cf.
  \cite{lefloch2002fully,sjogreen2010skew,fisher2013high,ranocha2017comparison,chen2017entropy}.
  Basically, schemes of lower order can be extrapolated if regular grids are used,
  cf. \cite[Section~3.2]{ranocha2018thesis}. Nevertheless, they can be used also on
  certain irregular grids.
\end{remark}

\section{Entropy Stability of Discrete Second Derivatives}
\label{sec:laplace}

In order to regularise a hyperbolic conservation law $\partial_t u + \partial_x f(u) = 0$,
where $u$ are the conserved variables and $f(u)$ is the flux, a parabolic term
can be added to the right-hand side, resulting in
\begin{equation}
  \partial_t u(t,x) + \partial_x f\bigl( u(t,x) \bigr)
  =
  \partial_x \bigl( \epsilon(x) \partial_x u(t,x) \bigr),
\end{equation}
where $\epsilon \geq 0$ controls the amount of viscosity. An entropy is a convex
function $U$ satisfying $U'(u) \cdot f'(u) = F'(u)$, where $F$ is the corresponding
entropy flux. Thus, smooth solutions of the conservation law fulfil the additional
conservation law
\begin{equation}
  \partial_t U(u)
  =
  U'(u) \cdot \partial_t u
  =
  - U'(u) \cdot f'(u) \cdot \partial_x u
  =
  - \partial_x F(u)
\end{equation}
and an entropy inequality $\partial_t U + \partial_x F \leq 0$ is required for
weak solutions, cf. \cite[Chapter~IV]{dafermos2010hyperbolic}. The viscosity
term on the right-hand side induces a global entropy inequality for sufficiently
smooth solutions. Indeed, in a periodic domain $\Omega$,
\begin{equation}
\label{eq:smooth-entropy-dissipation-CR}
  \int_\Omega U' \cdot \partial_x (\epsilon \partial_x u) \dif x
  =
  - \int_\Omega \epsilon (\partial_x U') \cdot \partial_x u \dif x
  =
  - \int_\Omega \epsilon (\partial_x u) \cdot U'' \cdot \partial_x u \dif x
  \leq
  0,
\end{equation}
since $U$ is convex and $\epsilon \geq 0$. In a non-periodic domain $\Omega$,
if $\epsilon$ vanishes on $\partial\Omega$, the same result holds. Otherwise,
there will be additional boundary terms.

The computation in \eqref{eq:smooth-entropy-dissipation-CR}
relies on the chain rule. Thus, it might be conjectured that second order difference
approximations of the Laplace operator (with possibly varying coefficients) are
also dissipative for every entropy $U$ and that higher order difference
approximations to the second derivative are not necessarily dissipative for every
entropy $U$.

\subsection{Second Order Derivative Operators}

In a periodic domain, the classical second order difference approximation to the
Laplace operator is given by
\begin{equation}
  (D_2 \vec{u})_i
  =
  \frac{\vec{u}_{i+1} - 2 \vec{u}_i + \vec{u}_{i-1}}{h^2}.
\end{equation}
Thus, multiplying pointwise by $U'(\vec{u}_i) = U'_i$ and summing up all terms yields
due to the periodicity of the domain
\begin{equation}
\begin{aligned}
  h^2 \sum_i U'_i \cdot (D_2 \vec{u})_i
  &=
  \sum_i U'_i \cdot (\vec{u}_{i+1} - \vec{u}_i) - \sum_i U'_i \cdot (\vec{u}_i - \vec{u}_{i-1})
  \\
  &=
  -\sum_i (U'_{i+1} - U'_i) \cdot (\vec{u}_{i+1} - \vec{u}_i)
  \leq
  0,
\end{aligned}
\end{equation}
since $U'$ is monotone. If $u$ is scalar valued, $U'$
is monotonically increasing, since $U'' \geq 0$ due to the convexity of $U$. If
$u$ is vector valued, the usual generalised definition of monotonicity is used,
i.e. $(U'(u) - U'(v)) \cdot (u - v) \geq 0$,
cf. \cite[section~II.2, p.~37]{showalter1997monotone}. Indeed, due to the
convexity of $U$,
\begin{multline}
  (\vec{u}_{i+1} - \vec{u}_i) \cdot \bigl( U'(\vec{u}_{i+1}) - U'(\vec{u}_i) \bigr)
  \\
  =
  \int_0^1 (\vec{u}_{i+1} - \vec{u}_i)
  \cdot U''\bigl( \vec{u}_i + s (\vec{u}_{i+1} - u_i) \bigr)
  \cdot (\vec{u}_{i+1} - \vec{u}_i) \dif s
  \geq
  0.
\end{multline}
This is exactly the chain rule for classical difference approximations.

If a variable coefficient $\epsilon \geq 0$ is considered in a periodic domain,
a second order approximation to $\partial_x (\epsilon \partial_x u)$ is given by
\begin{equation}
\label{eq:SBP-var-coef-2-interior}
  (D_2^\epsilon \vec{u})_i
  =
  \frac{\epsilon_{i} + \epsilon_{i+1}}{2 h^2} \vec{u}_{i+1}
  - \frac{\epsilon_{i-1} + 2 \epsilon_{i} + \epsilon_{i+1}}{2 h^2} \vec{u}_i
  + \frac{\epsilon_{i-1} + \epsilon_{i}}{2 h^2} \vec{u}_{i-1},
\end{equation}
cf. \cite{mattsson2012summation}. Using again the periodicity and the convexity
of $U$,
\begin{equation}
\begin{aligned}
  h^2 \sum_i U'_i \cdot (D_2^\epsilon \vec{u})_i
  &=
  \sum_i \frac{\epsilon_{i} + \epsilon_{i+1}}{2} U'_i \cdot (\vec{u}_{i+1} - \vec{u}_i)
  - \sum_i \frac{\epsilon_{i-1} + \epsilon_{i}}{2} U'_i \cdot (\vec{u}_i - \vec{u}_{i-1})
  \\
  &=
  -\sum_i
  \frac{\epsilon_{i} + \epsilon_{i+1}}{2} (U'_{i+1} - U'_i) \cdot (\vec{u}_{i+1} - \vec{u}_i)
  \leq
  0.
\end{aligned}
\end{equation}

Summation-by-parts operators for second derivatives with variable coefficients
have been developed in \cite{mattsson2012summation}. The second order discrete
derivative in the interior is given by \eqref{eq:SBP-var-coef-2-interior} and
equipped with the boundary closures
\begin{equation}
\begin{aligned}
  (D_2^\epsilon \vec{u})_0
  &=
  (2 \epsilon_0 - \epsilon_1) \vec{u}_0
  + (-3 \epsilon_0 + \epsilon_1) \vec{u}_1
  + \epsilon_0 \vec{u}_3,
  \\
  (D_2^\epsilon \vec{u})_N
  &=
  (2 \epsilon_N - \epsilon_{N-1}) \vec{u}_N
  + (-3 \epsilon_N + \epsilon_{N-1}) \vec{u}_{N-1}
  + \epsilon_N \vec{u}_{N-2}.
\end{aligned}
\end{equation}
If the variable coefficient $\epsilon$ vanishes at the boundary, i.e. if
$\epsilon_0 = 0 = \epsilon_N$, these boundary closures become
\begin{equation}
  (D_2^\epsilon \vec{u})_0
  =
  \epsilon_1 (\vec{u}_1 - \vec{u}_0),
  \qquad
  (D_2^\epsilon \vec{u})_N
  =
  -\epsilon_{N-1} (\vec{u}_N - \vec{u}_{N-1}).
\end{equation}
Since the discrete integral is given as a quadrature with weights on the diagonal
of the mass/norm matrix $H = \diag{1/2, 1, \dots, 1, 1/2}$, the discrete equivalent
of the integral $\int_\Omega U' \cdot \partial_x(\epsilon \partial_x u)$ is
\begin{equation}
\begin{aligned}
  &\phantom{=\;}
  \sum_{i=0}^N H_{ii} U'_i \cdot (D_2^\epsilon \vec{u})_i
  \\
  &=
    \frac{1}{2} \epsilon_1 U'_0 \cdot (\vec{u}_1 - \vec{u}_0)
  - \frac{1}{2} \epsilon_{N-1} U'_N \cdot (\vec{u}_N - \vec{u}_{N-1})
  \\&\quad
  + \sum_{i=1}^{N-1} \frac{\epsilon_{i} + \epsilon_{i+1}}{2} U'_i \cdot (\vec{u}_{i+1} - \vec{u}_{i})
  - \sum_{i=1}^{N-1} \frac{\epsilon_{i-1} + \epsilon_{i}}{2} U'_i \cdot (\vec{u}_{i} - \vec{u}_{i-1})
  \\
  &=
    \frac{\epsilon_0 + \epsilon_1}{2} U'_0 \cdot (\vec{u}_1 - \vec{u}_0)
  - \frac{\epsilon_{N-1} + \epsilon_N}{2} U'_N \cdot (\vec{u}_N - \vec{u}_{N-1})
  \\&\quad
  + \sum_{i=1}^{N-1} \frac{\epsilon_{i} + \epsilon_{i+1}}{2} U'_i \cdot (\vec{u}_{i+1} - \vec{u}_{i})
  - \sum_{i=0}^{N-2} \frac{\epsilon_{i} + \epsilon_{i+1}}{2} U'_{i+1} \cdot (\vec{u}_{i+1} - \vec{u}_{i})
  \\
  &=
  - \sum_{i=0}^N \frac{\epsilon_{i} + \epsilon_{i+1}}{2} (U'_{i+1} - U'_i) \cdot (\vec{u}_{i+1} - \vec{u}_{i})
  \\
  &\leq
  0.
\end{aligned}
\end{equation}
This proves
\begin{theorem}
\label{thm:laplace-2nd-order}
  The discretisations $D_2^\epsilon$ of the second derivative operator
  $\partial_x (\epsilon \partial_x \cdot)$
  with possibly varying coefficients $\epsilon \geq 0$ given above in periodic
  domains or on bounded domains with $\epsilon_0 = 0 = \epsilon_N$ are entropy
  dissipative for every convex entropy.
\end{theorem}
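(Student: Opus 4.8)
The plan is to compute the discrete analogue of $\int_\Omega U'\cdot\partial_x(\epsilon\partial_x u)\dif x$ in closed form and exhibit it as a sum of manifestly nonpositive ``edge'' contributions. Concretely, I would evaluate $\sum_i H_{ii}\, U'_i\cdot(D_2^\epsilon\vec u)_i$ (uniform quadrature weights in the periodic case, $H=\diag{1/2,1,\dots,1,1/2}$ on the bounded domain), substituting the definition \eqref{eq:SBP-var-coef-2-interior} of $D_2^\epsilon$ in the interior together with the stated boundary closures. The key observation that makes everything work is that the interior three-point stencil has the discrete summation-by-parts (divergence-of-a-flux) structure $h^2(D_2^\epsilon\vec u)_i = \tfrac{\epsilon_i+\epsilon_{i+1}}{2}(\vec u_{i+1}-\vec u_i) - \tfrac{\epsilon_{i-1}+\epsilon_i}{2}(\vec u_i-\vec u_{i-1})$, i.e. a discrete divergence of the edge flux $\tfrac{\epsilon_i+\epsilon_{i+1}}{2}(\vec u_{i+1}-\vec u_i)$.

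Next I would carry out the summation by parts. In the periodic case this is immediate: writing $\sum_i U'_i\cdot[(\text{edge flux})_i-(\text{edge flux})_{i-1}]$ and reindexing the shifted copy gives, with no boundary leftovers, $-\sum_i \tfrac{\epsilon_i+\epsilon_{i+1}}{2}(U'_{i+1}-U'_i)\cdot(\vec u_{i+1}-\vec u_i)$. On the bounded domain, the hypothesis $\epsilon_0=0=\epsilon_N$ collapses the boundary closures to the one-sided differences $(D_2^\epsilon\vec u)_0=\epsilon_1(\vec u_1-\vec u_0)$ and $(D_2^\epsilon\vec u)_N=-\epsilon_{N-1}(\vec u_N-\vec u_{N-1})$; multiplying by the corner weights $H_{00}=H_{NN}=1/2$ and using $\epsilon_0=0$ to rewrite $\tfrac12\epsilon_1=\tfrac{\epsilon_0+\epsilon_1}{2}$ (and symmetrically $\tfrac12\epsilon_{N-1}=\tfrac{\epsilon_{N-1}+\epsilon_N}{2}$) lets the two boundary contributions be absorbed as the missing endpoint terms of the two shifted interior sums. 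After reindexing, the whole expression again combines into the single edge sum $-\sum_i\tfrac{\epsilon_i+\epsilon_{i+1}}{2}(U'_{i+1}-U'_i)\cdot(\vec u_{i+1}-\vec u_i)$.

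Finally I would invoke convexity of $U$ to sign each summand. For scalar-valued $u$ this is just monotonicity of $U'$ (since $U''\ge0$), giving $(U'_{i+1}-U'_i)(\vec u_{i+1}-\vec u_i)\ge0$; for vector-valued $u$ the same holds by the generalised monotonicity $(U'(\vec u_{i+1})-U'(\vec u_i))\cdot(\vec u_{i+1}-\vec u_i)=\int_0^1(\vec u_{i+1}-\vec u_i)\cdot U''(\vec u_i+s(\vec u_{i+1}-\vec u_i))\cdot(\vec u_{i+1}-\vec u_i)\dif s\ge0$. Since moreover $\epsilon_i\ge0$ for every $i$, each term $-\tfrac{\epsilon_i+\epsilon_{i+1}}{2}(U'_{i+1}-U'_i)\cdot(\vec u_{i+1}-\vec u_i)$ is $\le0$, hence the discrete integral is $\le0$, which is exactly the asserted entropy dissipation.

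The only genuine bookkeeping hurdle I anticipate is the bounded-domain case: one must check that the $1/2$ corner weights and the special endpoint stencils mesh exactly with the two index-shifted interior sums so that no spurious boundary term survives. This is precisely where $\epsilon_0=0=\epsilon_N$ is indispensable, and it must be used deliberately to promote the endpoint coefficients $\tfrac12\epsilon_1$ and $\tfrac12\epsilon_{N-1}$ to the generic edge weight $\tfrac{\epsilon_i+\epsilon_{i+1}}{2}$; once that is done the telescoping is routine and the convexity argument finishes the proof for both the periodic stencil and the SBP variant.
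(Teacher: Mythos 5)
Your proposal is correct and follows essentially the same route as the paper: pointwise multiplication by $U'_i$ with the quadrature weights $H$, exploiting the flux-divergence form of the stencil, using $\epsilon_0 = 0 = \epsilon_N$ to merge the boundary closures into the reindexed interior sums, and concluding from the (generalised) monotonicity of $U'$ that each edge term is nonpositive. No gaps.
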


\begin{remark}
\label{rem:laplace-2nd-order}
  The statement of Theorem~\ref{thm:laplace-2nd-order} holds for the second order
  summation-by-parts operator $D_2^\epsilon$ (and its interior stencil in periodic
  domains) mentioned above. It is not necessarily true for every second order
  approximation of $\partial_x (\epsilon \partial_x \cdot)$. Indeed, in a periodic
  domain, such an approximation is also given by
  \begin{equation}
    (\widetilde D_2^\epsilon \vec{u})_i
    =
    \epsilon_i \frac{\vec{u}_{i+1} - 2 \vec{u}_{i} + \vec{u}_{i-1}}{h^2}
    + \frac{\epsilon_{i+1} - \epsilon_{i-1}}{2 h} \frac{\vec{u}_{i+1} - \vec{u}_{i-1}}{2 h}.
  \end{equation}
  Choose the grid $x_i = i$, $i \in \set{0,1,2,3}$, with periodic boundary conditions,
  i.e. $\vec{u}_3 = \vec{u}_0$. Set $\vec{u} = (\vec{u}_0,\vec{u}_1,\vec{u}_2)
  = (0.6, 0.8, 0.2)$, $\epsilon = (0.4, 0.2, 0.8)$ and use the entropy given by
  $U(u) = u$. Then, $U'(u) = 1$ and
  \begin{equation}
    \sum_{i=0}^2 U'(\vec{u}_i) \cdot (\widetilde D_2^\epsilon \vec{u})_i
    =
    -0.17 - 0.20 + 0.79
    =
    0.42
    >
    0.
  \end{equation}
  While this does not prove that the SBP operator mentioned above is the only
  second order entropy dissipative approximation, it illustrates the good
  properties of this operator.
\end{remark}

\subsection{Higher Order Derivative Operators}

Since there is no discrete chain rule for higher order difference approximations
to the first derivative, it might be conjectured that discrete higher order
second derivatives are in general not entropy dissipative. In order to prove
this, it suffices to consider the case of constant coefficients. At the grid point
$x_j$, a general (linear) discrete approximation of the second derivative can be
written as
\begin{equation}
\label{eq:2nd-derivative-coefficients}
  (D_2 \vec{u})_j = \sum_k c_k \vec{u}_{j+k}.
\end{equation}
The following result will be used.
\begin{lemma}
\label{lem:2nd-derivative-coefficients}
  If \eqref{eq:2nd-derivative-coefficients} is an approximation of the second
  derivative with order of accuracy $p > 2$, there is a $k \neq 0$ such that
  $c_k < 0$.
\end{lemma}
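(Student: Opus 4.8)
The plan is to exploit the order conditions for a second-derivative approximation with constant coefficients. Writing the stencil as $(D_2 \vec u)_j = \sum_k c_k \vec u_{j+k}$ on a uniform grid with spacing $h$, exactness for polynomials of degree $\leq p$ means that, after scaling out the factor $h^{-2}$, the coefficients $c_k$ (set $b_k := h^2 c_k$) satisfy $\sum_k b_k = 0$, $\sum_k b_k k = 0$, $\sum_k b_k k^2 = 2$, and $\sum_k b_k k^\ell = 0$ for $3 \leq \ell \leq p+1$. The assertion to prove is that whenever $p > 2$, at least one of the $b_k$ with $k \neq 0$ is strictly negative.

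First I would argue by contradiction: suppose $b_k \geq 0$ for all $k \neq 0$. From $\sum_k b_k k^2 = 2 > 0$ the off-centre coefficients cannot all vanish, so the set $S := \{k \neq 0 : b_k > 0\}$ is nonempty and finite, and $w := \sum_{k \in S} b_k > 0$. Consider the probability measure on $S$ that assigns mass $b_k / w$ to $k$. Then $\sum_{k\in S} b_k k^\ell / w$ is the $\ell$-th moment of this measure; in particular the second moment is $2/w > 0$ and the fourth moment is $\bigl(\sum_k b_k k^4\bigr)/w$, which vanishes since $p > 2$ forces $\ell = 4 \leq p+1$. But a probability measure supported on a nonempty set with a strictly positive second moment cannot have vanishing fourth moment — this is immediate from $\mathbb{E}[k^4] \geq \mathbb{E}[k^2]^2 > 0$ by Jensen's inequality (or Cauchy–Schwarz), since the support is not concentrated at $k = 0$. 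This contradiction establishes the claim, and the main obstacle is essentially just setting up this moment/positivity argument cleanly rather than any hard computation.

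Alternatively, and perhaps more in the spirit of the preceding material, one can phrase the same idea without probability: if all $b_k \geq 0$ for $k \neq 0$, then $\sum_k b_k k^4 = \sum_{k\neq 0} b_k k^4 \geq 0$, with equality only if every $b_k$ with $k \neq 0$ and $k^4 > 0$ vanishes, i.e. only if all off-centre coefficients are zero — but then $\sum_k b_k k^2 = 0 \neq 2$, a contradiction. Hence $p > 2$ (which guarantees the order condition $\sum_k b_k k^4 = 0$ is actually imposed) is incompatible with $b_k \geq 0$ for all $k \neq 0$. I would present this second, elementary version as the proof, since it needs only the single order condition at $\ell = 4$ together with the normalisation $\ell = 2$, and makes transparent why the threshold is exactly $p > 2$: for $p = 2$ the condition on the fourth moment is absent, which is consistent with the nonnegative standard stencil $(1,-2,1)$.
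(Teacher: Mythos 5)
Your elementary second version is exactly the paper's proof: the order conditions obtained by Taylor expansion include the fourth-moment condition $\sum_k c_k (x_{j+k}-x_j)^4 = 0$ once $p > 2$, and nonnegativity of the off-centre coefficients would force them all to vanish, contradicting $\sum_k c_k (x_{j+k}-x_j)^2 = 2$. So the proposal is correct and takes essentially the same route (your probabilistic Jensen variant is just a heavier rephrasing, and the uniform-grid normalisation $b_k = h^2 c_k$ is an inessential specialisation of the paper's non-uniform setting).
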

\begin{proof}
  Using Taylor expansion, the order conditions for an order of accuracy $p = 3$ are
  \begin{equation}
  \begin{aligned}
    \sum_k c_k &= 0,
    \qquad &
    \sum_k c_k (x_{j+k} - x_j) &= 0,
    \qquad &
    \sum_k c_k (x_{j+k} - x_j)^2 &= 2,
    \\
    \sum_k c_k (x_{j+k} - x_j)^3 &= 0,
    \qquad &
    \sum_k c_k (x_{j+k} - x_j)^4 &= 0.
  \end{aligned}
  \end{equation}
  Due to the last condition, at least one $c_k$ with $k \neq 0$ must be negative.
\QED
\end{proof}
\begin{example}
  The classical fourth order approximation to the second derivative on a periodic
  domain is given by
  \begin{equation}
    h^2 (D_2 \vec{u})_i
    =
    - \frac{1}{12} (\vec{u}_{i+2} + \vec{u}_{i-2})
    + \frac{4}{3} (\vec{u}_{i+1} + \vec{u}_{i-1})
    - \frac{5}{2} \vec{u}_i.
  \end{equation}
\end{example}

Lemma~\ref{lem:2nd-derivative-coefficients} can be used to prove the last main
observation of this article.
\begin{theorem}
\label{thm:laplace-higher-order}
  If $D_2$ is a discrete derivative operator approximating the second derivative
  with order of accuracy $p > 2$, it is not dissipative for every entropy.
\end{theorem}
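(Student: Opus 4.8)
The plan is to exploit the fact that a negative off-diagonal coefficient --- which Lemma~\ref{lem:2nd-derivative-coefficients} guarantees once $p > 2$ --- destroys the discrete maximum principle: I will construct a grid function $\vec{u}$ whose \emph{unique} maximum sits at a node $m$ at which nevertheless $(D_2 \vec{u})_m > 0$, and then choose an entropy whose derivative concentrates essentially all of its mass at that maximum.

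Following the reduction in the text, I work on a periodic grid with $N$ large compared with the stencil width and write $(D_2 \vec{u})_j = \sum_k c_k \vec{u}_{j+k}$. By Lemma~\ref{lem:2nd-derivative-coefficients} there is a $k_0 \neq 0$ with $c_{k_0} < 0$. For a parameter $M > 0$ and a fixed $\delta > 0$ I set
\[
  \vec{u}_i =
  \begin{cases}
    0, & i = m, \\
    -M, & i = m + k_0, \\
    -\delta, & \text{otherwise},
  \end{cases}
\]
so that $\vec{u}$ attains its maximum only at $i = m$. Using $\sum_k c_k = 0$ one finds
\[
  (D_2 \vec{u})_m = -M c_{k_0} + \delta (c_0 + c_{k_0}),
\]
which, because $c_{k_0} < 0$, becomes strictly positive once $M$ is large enough; I fix such an $M$. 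Moreover $(D_2 \vec{u})_i = 0$ for every node $i$ outside the finite window $W$ of nodes lying within stencil distance of $m$ or of $m + k_0$, since away from $W$ every entry in the stencil equals $-\delta$ and $\sum_k c_k = 0$.

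I then test with the smooth, strictly convex entropy $U(u) = \e^{\lambda u}$, $\lambda > 0$. Since $\vec{u}_i \leq -\min(\delta, M) < 0$ for all $i \neq m$, collecting the finitely many nonzero terms gives
\[
  \sum_i U'(\vec{u}_i) (D_2 \vec{u})_i
  = \lambda \Bigl( (D_2 \vec{u})_m + \e^{-\lambda M} (D_2 \vec{u})_{m+k_0} + \e^{-\lambda \delta} R \Bigr),
\]
where $R$ is the sum of $(D_2 \vec{u})_i$ over the remaining nodes of $W$ and is a fixed number once $M$ and $\delta$ are fixed. Letting $\lambda \to \infty$ the bracket tends to $(D_2 \vec{u})_m > 0$, so the left-hand side is strictly positive for $\lambda$ sufficiently large. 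This contradicts entropy dissipativity for the convex entropy $U$, so $D_2$ is not dissipative for every entropy. In the bounded-domain case the only change is to place $m$ in the interior and carry along the positive norm weights $H_{ii}$, which does not affect the conclusion.

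The hard part is not the computation but finding the right construction: one has to recognise that the obstruction of Lemma~\ref{lem:2nd-derivative-coefficients} is precisely a breakdown of the discrete maximum principle, that this breakdown can be amplified by sending $M \to \infty$, and that it can then be isolated by an entropy $\e^{\lambda u}$ whose derivative overwhelmingly favours the node $m$. Quadratic entropies are of no use here, because the associated quadratic form is typically negative semidefinite; genuine convexity beyond second order is what makes the counterexample possible.
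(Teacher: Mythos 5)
Your proposal is correct and follows essentially the same route as the paper: both invoke Lemma~\ref{lem:2nd-derivative-coefficients} to obtain a negative off-diagonal coefficient, localise $\vec{u}$ so that the node paired with that coefficient carries a large negative value, and pick a convex entropy whose derivative is concentrated at the node where $(D_2\vec{u})$ is positive. The only difference is cosmetic: the paper uses the kink entropy $U(u)=\max\set{0,u-\epsilon/2}$ with small $\epsilon$ (smoothed afterwards in a remark), whereas you use $U(u)=\e^{\lambda u}$ with $\lambda\to\infty$ and amplify via $M$, which buys smoothness and strict convexity at the cost of a limiting argument.
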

\begin{proof}
  Consider the grid point $x_j$. Writing the approximation of the second derivative
  as in \eqref{eq:2nd-derivative-coefficients}, there is some coefficient $c_k < 0$,
  $k \neq 0$, due to Lemma~\ref{lem:2nd-derivative-coefficients}. Fix $\vec{u}_{j+k} < 0$,
  say $\vec{u}_{j+k} = -1$. Set $\vec{u}_j = \epsilon > 0$, where $\epsilon > 0$
  will be fixed later. Choose $\vec{u}_i = 0$ for $i \neq j, j+k$. Finally,
  consider the entropy $U(u) = \max\set{0, u - \epsilon/2}$. Then, $U'(u) = 0$
  for $u < \epsilon/2$ and $U'(u) = 1$ for $u > \epsilon/2$. Thus,
  \begin{equation}
    \sum_i U'(\vec{u}_i) \cdot (D_2 \vec{u})_i
    =
    U'(\vec{u}_j) \cdot (D_2 \vec{u})_j
    =
    1 \cdot \sum_l c_l \vec{u}_{j+l}
    =
    c_0 \underbrace{\vec{u}_j}_{= \epsilon}
    + \underbrace{c_k}_{< 0} \underbrace{\vec{u}_{j+k}}_{< 0}
    >
    0,
  \end{equation}
  if $\epsilon > 0$ is chosen small enough.
\QED
\end{proof}

\begin{remark}
  The entropy $U$ used in the proof of Theorem~\ref{thm:laplace-higher-order} can
  be made smooth by suitable modifications around $u = \epsilon/2$.
\end{remark}

\begin{remark}
  Of course, higher order approximations to second derivatives that are dissipative
  for a specific entropy can be constructed. Classical difference operators are
  negative semidefinite, i.e. they are dissipative for the $L^2$ entropy
  $U(u) = \frac{1}{2} u^2$ with $U'(u) = u$. For  a general entropy $U$, entropy
  dissipative second derivatives can be constructed by using the entropy variables
  $w := U'(u)$ instead of the conserved variables $u$, cf. \cite{fisher2013high}.
\end{remark}

\begin{remark}
  In periodic domains, the classical central finite difference approximations
  to the first derivative of higher order can be constructed via extrapolation
  from the second order operator, cf. \cite[Section~3.2]{ranocha2018thesis}.
  Thus, by enforcing positivity of the corresponding coefficients for the second
  derivative, entropy dissipative terms can be constructed similarly for higher
  order first derivative operators, as used in \cite{svard2009shock}. However,
  these are not higher order approximations of the second derivative.
\end{remark}

\section{Summary and Discussion}
\label{sec:summary}

In this article, product and chain rules using averaged compositions have been
shown to hold for second order approximations to first order derivative operators,
similarly to corresponding results for functions of bounded variation
(Lemma~\ref{lem:product-rule} and Lemma~\ref{lem:chain-rule}). While such mimetic
properties may have nice implications, it is proven that such results cannot hold
for higher order approximations, independently of the grid or the exact form
of the discrete derivative operator (Theorem~\ref{thm:product-rule} and
Theorem~\ref{thm:chain-rule}). This result holds also for spectral
collocation and nodal discontinuous Galerkin methods.

Furthermore, the entropy dissipation induced by difference operators approximating
second derivatives with varying coefficients is studied. While certain second order
approximations are dissipative for all entropies (Theorem~\ref{thm:laplace-2nd-order}),
such a result is not valid for higher order approximations
(Theorem~\ref{thm:laplace-higher-order}).

These results (Theorems~\ref{thm:product-rule}, \ref{thm:chain-rule},
\ref{thm:laplace-higher-order}) have been proven for linear difference operators.
Indeed, they rely on Lemma~\ref{lem:asymptotic-expansion-D1} and
Lemma~\ref{lem:2nd-derivative-coefficients}, which assume linearity. Thus,
similar to classical results for (scalar) conservation laws \cite{harten1987nonlinearity},
it might be possible to construct nonlinear operators approximating the first
and second derivative such that desirable properties can be obtained.

While these results are interesting on their own, there are several connections with
other results and open questions. It is well known that higher order schemes can be
more efficient for certain problems than lower order ones \cite{kreiss1972comparison}.
However, the numerical treatment of discontinuities in solutions to hyperbolic
conservation laws has to be well-considered, especially for higher order schemes.
Even though a single entropy inequality can be sufficient for genuinely nonlinear
scalar conservation laws \cite{panov1994uniqueness,delellis2004minimal,krupa2017single}, general conservation laws pose additional challenges \cite{lefloch2002hyperbolic}.
Since certain mimetic properties discussed in this article are limited to second
order schemes, suitable detection of discontinuities and corresponding adaptations
of the numerical methods may be inevitable.

\section*{Acknowledgements}

This work was supported by the German Research Foundation (DFG, Deutsche
Forschungsgemeinschaft) under Grant SO~363/14-1. The author would like to thank
the anonymous reviewers for their helpful comments and valuable suggestions to
improve this article.

\appendix

\printbibliography

\end{document}